\documentclass[11pt]{amsart}
\usepackage{latexsym,amsmath,amssymb}
\newtheorem{thm}{Theorem}[section]

\newtheorem{lem}[thm]{Lemma}

\def\Z{{\mathbb Z}}
\def\Cay{{\rm Cay}}
\def\Circ{{\rm Circ}}
\def\Aut{{\rm Aut}}

\begin{document}
\title{Automorphisms of circulants that respect partitions}
\author[J.~Morris]{Joy Morris} 
\address{Department of Mathematics and Computer Science \\
University of Lethbridge \\
Lethbridge, AB. T1K 3M4. Canada}
\email{joy.morris@uleth.ca}

\thanks{This research was supported in part by the National Science
  and Engineering Research Council of Canada} 

\keywords{automorphism, circulant graph, Cayley graph} 

\begin{abstract}
In this paper, we begin by partitioning the edge (or arc) set of a circulant (di)graph according to which generator in the connection set leads to each edge.  We then further refine the partition by subdividing any part that corresponds to an element of order less than $n$, according to which of the cycles generated by that element the edge is in.  It is known that if the (di)graph is connected and has no multiple edges, then any automorphism that respects the first partition and fixes the vertex corresponding to the group identity must be an automorphism of the group (this is in fact true in the more general context of Cayley graphs).  We show that automorphisms that respect the second partition and fix $0$ must also respect the first partition, so are again precisely the group automorphisms of $\Z_n$.
\end{abstract}
\maketitle

\section{Introduction}\label{intro}

In any Cayley digraph, there is a natural partition of the edge set according to the elements of the connection set that define them.  If $\Gamma=\Cay(G;S)$ where $S=\{s_1, \ldots, s_k\}$, then this natural partition is defined by $${\mathcal B}=\{\{(g,gs_i): g \in G\}: 1\le i \le k\}.$$

Now, any $s_i \in S$ generates a subgroup of $G$.  Let $G_{i,1},G_{i,2}, \ldots, G_{i,k_i}$ be the $k_i$ distinct cosets of this subgroup (and $G_{i,1}=\langle s_i \rangle$).  Then we can form a partition $\mathcal C$ that is a refinement of $\mathcal B$, with $${\mathcal C}=\{\{(g,gs_i): g \in G_{i,j}\}:1 \le j \le k_i, 1 \le i \le k\}.$$  Notice that each set in $\mathcal C$ consists of precisely the edges of a cycle all of whose edges are formed by a single element of $S$.

In the case of a Cayley graph, we replace each of the ordered pairs above with the corresponding unordered pair, and eliminate any duplication that may result (so $\mathcal B$ and $\mathcal C$ are sets, not multi-sets).

We say that an automorphism $\beta$ of a graph {\em respects} a partition $\{A_1, \ldots, A_n\}$ of the edge set of that graph, if $$\{A_1, \ldots, A_n\}=\{\beta(A_1), \ldots, \beta(A_n)\}.$$

It is little more than an observation to prove that in a connected Cayley digraph, any automorphism that respects the partition $\mathcal B$ and fixes the vertex $1$ is an automorphism of $G$.  Because the digraph is connected, $\langle S \rangle=G$, and for an automorphism $\alpha$ that fixes the vertex $1$ to respect the partition $\mathcal B$ means precisely that for any $s_i, s_j \in S$ we have $\alpha(s_is_j)=\alpha(s_i)\alpha(s_j)$.  Similarly for longer words from $\langle S \rangle$.  In the case of graphs, the proof becomes more complicated since respecting the partition means only that $\alpha(s_is_j)$ is one of $\alpha(s_i)\alpha(s_j)$, $\alpha(s_i)\alpha(s_j^{-1})$, $\alpha(s_i^{-1})\alpha(s_j)$, or $\alpha(s_i^{-1})\alpha(s_j^{-1})$.  However, the proof of this for circulant graphs is a special case of our main theorem.

It is our main theorem that in the case of circulant graphs and digraphs (Cayley graphs on $\Z_n$), we can show that only group automorphisms of $\Z_n$ respect the partition $\mathcal C$ while fixing the vertex $0$.

This question was suggested by Toma\v{z} Pisanski.  It arose in the context of studying the structure and automorphism groups of {\it GI}-graphs, a generalisation of both the class of generalised Petersen graphs and the Foster census {\it I}-graphs (see \cite{CPZ}). The question seemed to me to be of interest in its own right.

\section{Main Theorem and Proof}

A \emph{Cayley digraph} $\Cay(G;S)$ for a group $G$ and a subset $S \subset G$ with $1 \not\in S$, is the digraph whose vertices correspond to the elements of $G$, with an arc from $g$ to $gs$ whenever $g \in G$ and $s \in S$.  If $S$ is closed under inversion, then we combine the arcs from $g$ to $gs$ and from $gs$ to $gss^{-1}=g$ into a single undirected edge, and the resulting structure is a \emph{Cayley graph}.  A \emph{circulant (di)graph} $\Circ(n;S)$ is a Cayley (di)graph on the group $G=\Z_n$.

We introduce some notation that will be useful in our proof. For this notation, we assume that $
\Gamma=\Circ(n;S)$ is fixed, with $S=\{ s_1, \ldots, s_c\}$.  For any $k$, we will use $S_k$ to denote $\langle s_1, \ldots, s_k \rangle$.

We begin with some lemmas. Notice that since the circulant graph is defined on a cyclic group, we will be using additive notation for this group.

\begin{lem}\label{multiplier}
Let $\alpha \in \Aut(\Gamma)$ respect $\mathcal C$ and fix the vertex labelled $0$. Suppose $s, s' \in S$ and $\alpha(s) \equiv js\pmod{n_1}$, where $n=n_1n_2$, $\gcd(n_1,n_2)=1$, and $\langle n_2 \rangle \le \langle s \rangle$. Then $\alpha(s')\equiv js' \pmod{n_1}$.
\end{lem}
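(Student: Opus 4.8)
The plan is to extract from the hypotheses a single ``linearity along cycles'' principle and then apply it twice, once to $s$ and once to $s'$, linking the two through the cleverly chosen element $n_2 s'$.

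First I would record what respecting $\mathcal C$ and fixing $0$ buys us for a single generator. If $t \in S$, the edges of the cycle $\{(g,g+t): g \in \langle t\rangle\}$ form one part of $\mathcal C$, so $\alpha$ carries them to another part; since $\alpha(0)=0$, that image part is the cycle through $0$ of some generator $t^\ast$, namely the one on the subgroup $\langle t^\ast\rangle$. Tracing this cycle isomorphism outward from $0$ shows that the image vertices form an arithmetic progression with common difference $\alpha(t)$, so that $\alpha(kt)=k\,\alpha(t)$ in $\Z_n$ for every integer $k$. In the digraph case this is immediate because arc directions are preserved; in the graph case the cycle may be traversed in either direction, but either choice gives the same conclusion $\alpha(kt)=k\,\alpha(t)$ once the first step fixes the common difference. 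Proving this last point cleanly --- that is, ruling out that the traversal ``reverses direction'' partway around --- is the one place that needs care, and is where I expect the main friction to be.

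Next I would feed the hypothesis on $s$ into this principle. From $\alpha(ks)=k\,\alpha(s)$ together with $\alpha(s)\equiv js\pmod{n_1}$ I get $\alpha(ks)\equiv j(ks)\pmod{n_1}$ for all $k$; since $\langle n_2\rangle \le \langle s\rangle$, every element $x$ of $\langle n_2\rangle$ has the form $x=ks$, and hence $\alpha(x)\equiv jx\pmod{n_1}$ for all $x \in \langle n_2\rangle$.

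The key step is then to consider the element $n_2 s'$. On one hand $n_2 s'$ is a multiple of $n_2$, so $n_2 s' \in \langle n_2\rangle$ and the previous paragraph gives $\alpha(n_2 s')\equiv j\,n_2 s'\pmod{n_1}$. On the other hand, applying the linearity principle to $s'$ gives $\alpha(n_2 s')=n_2\,\alpha(s')$. Equating these, $n_2\,\alpha(s') \equiv j\,n_2 s' \pmod{n_1}$, and since $\gcd(n_1,n_2)=1$ the factor $n_2$ is invertible modulo $n_1$ and may be cancelled, yielding $\alpha(s')\equiv js'\pmod{n_1}$, as desired. The whole argument hinges on $n_2 s'$ simultaneously lying in the ``understood'' subgroup $\langle n_2\rangle$ and being a cycle-multiple of $s'$; locating such an element is the real content, after which only the cancellation of the unit $n_2$ remains.
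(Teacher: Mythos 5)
Your proof is correct and takes essentially the same route as the paper: both rest on the linearity $\alpha(kt)=k\alpha(t)$ along cycles (a consequence of respecting $\mathcal C$ and fixing $0$), apply it to a multiple of $s'$ that lies in $\langle n_2 \rangle \le \langle s \rangle$, and finish by cancelling a unit modulo $n_1$. The only difference is cosmetic---the paper works with $mn_2s'$ where $mn_2 \equiv 1 \pmod{n_1}$ while you use $n_2s'$ directly and cancel $n_2$ at the end---and the ``direction-reversal'' point you flag is handled no more explicitly in the paper, which simply asserts the same linearity.
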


\begin{proof}
Let $m$ be such that $mn_2 \equiv 1 \pmod{n_1}$; such an $m$ exists since $\gcd(n_1,n_2)=1$, and we will also have $\gcd(m,n_1)=1$. Since $\alpha$ respects $\mathcal C$, we have $\alpha(as)\equiv ajs\pmod{n_1}$, for any $a \in \mathbb Z$. In particular, $\alpha(amn_2s)\equiv amn_2js \equiv ajs\pmod{n_1}$, for any integer $a$.

Since $\langle n_2 \rangle \le \langle s\rangle$, there is some $t$ such that $st=n_2$, so $st$ has order $n_1$ in $\mathbb Z_n$. By the definition of $m$, we see that $mn_2st \equiv st \pmod{n_1}$, so  has order $n_1$ in $\mathbb Z_n$. Thus every element of $\langle n_2 \rangle$ can be written as a multiple of $mn_2s$.

Consider $\alpha(mn_2s')$. Clearly $mn_2s' \in \langle n_2 \rangle$, so $mn_2s'=xmn_2s$ for some integer $x$. Now $\alpha(mn_2s')=\alpha(xmn_2s)\equiv xmn_2js\pmod{n_1}$ by the conclusion of the first paragraph of this proof. Furthermore, this is $mn_2js'$.  Since $\alpha$ respects $\mathcal C$, we know that $mn_2\alpha(s')=\alpha(mn_2s')\equiv mn_2js'\pmod{n_1}$.  Since $m$ and $n_2$ are coprime to $n_1$, this implies that $\alpha(s') \equiv js'\pmod{n_1}$, as desired.
\end{proof}

The next lemma follows from the first. We will be using notation that was introduced by Godsil in \cite{Godsil} and has become standard: $\Aut(G;S)$ denotes the automorphisms of the group $G$ that fix $S$ setwise, where $S \subseteq G$.

\begin{lem}\label{j}
Assume $\Gamma$ is connected. Let $\alpha \in \Aut(\Gamma)$ respect $\mathcal C$ and fix the vertex labelled $0$. Then there is some $\beta \in \Aut(\Z_n;S)$ such that $\beta\alpha$ fixes the vertex $as$ for every $a \in \Z$ and every $s \in S$.
\end{lem}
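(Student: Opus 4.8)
The plan is to produce the desired $\beta$ as multiplication by a single unit of $\Z_n$, assembled prime-by-prime from the multipliers supplied by Lemma~\ref{multiplier}, and then to check that this multiplication simultaneously undoes $\alpha$ on every $s\in S$ and preserves $S$ setwise. First I would record the linearity that makes everything tractable: because $\alpha$ respects $\mathcal C$ and fixes $0$, the cycle $\langle s\rangle$ through $0$ is carried to another cycle of $\mathcal C$ through $0$, necessarily one of the $\langle s_k\rangle$; reading off the image vertex by vertex gives $\alpha(as)\equiv a\,\alpha(s)\pmod n$ for every integer $a$, and $\alpha(s)$, being a neighbour of $0$, lies in $S$ (in the digraph case $\alpha(s)=s_k$; in the graph case $\alpha(s)=\pm s_k$, and $S$ is symmetric). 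In particular $\alpha$ permutes $S$ and $\alpha(s)$ has the same order as $s$. This linearity reduces the lemma to finding $\beta\in\Aut(\Z_n;S)$ with $\beta\alpha(s)=s$ for each $s\in S$, since then $\beta\alpha(as)=a\,\beta\alpha(s)=as$ automatically.

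Next, for each prime $p$ with $p^e\,\|\,n$ I would set $n_1=p^e$ and $n_2=n/p^e$. Connectivity forces some generator $s$ to satisfy $p\nmid s$ (otherwise $\langle S\rangle$ would lie in the proper subgroup $p\Z_n$), and for that $s$ one checks $\langle n_2\rangle\le\langle s\rangle$, i.e.\ $p^e\mid\mathrm{ord}(s)$. Writing $\alpha(s)\equiv j_p s\pmod{p^e}$ for this $s$ and invoking Lemma~\ref{multiplier}, I obtain a single residue $j_p$ with $\alpha(s')\equiv j_p s'\pmod{p^e}$ for every $s'\in S$. Since $\alpha(s)$ has the same order as $s$, we also have $p\nmid\alpha(s)$, and combined with $p\nmid s$ this forces $p\nmid j_p$; thus $j_p$ is a unit modulo $p^e$.

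Finally I would glue the $j_p$ by the Chinese Remainder Theorem into a unit $w\in\Z_n^{*}$ with $w\equiv j_p\pmod{p^e}$ for every $p\mid n$, so that $\alpha(s)\equiv ws\pmod n$ for all $s\in S$. Taking $\beta$ to be multiplication by $w^{-1}$ gives $\beta\alpha(s)=s$, hence $\beta\alpha(as)=as$ for all $a$ and $s$ by the linearity above. Because $s\mapsto\alpha(s)=ws$ is a bijection of $S$, multiplication by $w$, and so by $w^{-1}$, preserves $S$ setwise, which is exactly $\beta\in\Aut(\Z_n;S)$. The step I expect to be most delicate is the unit check: ensuring each local multiplier $j_p$ is invertible so that the assembled $w$ is a genuine group automorphism rather than merely an endomorphism. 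This is precisely where connectedness (to locate a generator with $p\nmid s$) and the order-preservation of $\alpha$ on cycles are both essential.
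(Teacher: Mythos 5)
Your proposal is correct and follows essentially the same route as the paper's proof: use connectivity to find, for each prime power $p^e\,\|\,n$, a generator coprime to $p$, apply Lemma~\ref{multiplier} to get a single local multiplier $j_p$ (a unit because $\alpha$ preserves orders of generators), glue the $j_p$ by the Chinese Remainder Theorem into a unit $w$ with $\alpha(s)=ws$ for all $s\in S$, and take $\beta$ to be multiplication by $w^{-1}$, which lies in $\Aut(\Z_n;S)$ since $\alpha$ permutes the neighbours of $0$. The only cosmetic difference is that you state the linearity $\alpha(as)\equiv a\alpha(s)\pmod n$ explicitly at the outset, whereas the paper invokes it implicitly via ``$\alpha$ respects $\mathcal C$.''
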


\begin{proof}
Let $n=p_1^{e_1}\ldots p_r^{e_r}$, where $p_1, \ldots, p_r$ are distinct primes.  For any $p_i$, since $\Gamma$ is connected, there is some $s_i \in S$ such that $p_i \nmid s_i$, and hence $\langle n/p_i^{e_i}\rangle \le \langle s_i \rangle$.  Let $j_i$ be such that $\alpha(s_i) \equiv j_is_i \pmod{p_i^{e_i}}$. Notice that $j_i \not\equiv 0 \pmod{p_i}$ since $\alpha$ respecting $\mathcal C$ implies that both $s_i$ and $\alpha(s_i)$ have the same order in $\Z_n$. Thus the conditions of Lemma \ref{multiplier} are satisfied, and we conclude that for any $s \in S$ we have $\alpha(s) \equiv j_is \pmod{p_i^{e_i}}$.

Let $j$ be such that $j \equiv j_i\pmod{p_i^{e_i}}$ for every $1 \le i \le r$, and $1 \le j \le n-1$; by the Chinese Remainder Theorem, such a $j$ exists. Then for every $s \in S$, we have $\alpha(s) = js$, since this is the only value that satisfies all of the congruences. Furthermore, since $\alpha$ respects $\mathcal C$, this means that for every $a \in \Z$ and every $s \in S$, $\alpha(as)=ajs$. 

Let $\beta \in \Aut(\Z_n)$ be the automorphism that corresponds to multiplication by $j^{-1}$. (Since $j \not\equiv 0\pmod{p_i}$ for any $i$, $j \in \Z_n^*$ has an inverse, and this is an automorphism of $\Z_n$.) Since $\alpha$ is an automorphism of $\Gamma$ and $\alpha(s)=js$ for every $s \in S$, we see that multiplication by $j$ fixes $S$ setwise, so $\beta \in \Aut(Z_n;S)$. Clearly $\beta\alpha$ fixes $as$ for every $a \in \Z$ and every $s \in S$.
\end{proof}

The next lemma is an easy consequence of the definition of respecting $\mathcal C$, but is very useful.

\begin{lem}\label{cosets}
Let $G=\langle S' \rangle$ for some $S' \subseteq S$. Let $\alpha \in \Aut(\Gamma)$ respect $\mathcal C$. For any $x \in \Z_n$, $\alpha(x+G)$ is a coset of $G$.
\end{lem}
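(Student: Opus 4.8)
The plan is to reduce the statement to the behaviour of $\alpha$ on cosets of the cyclic subgroups generated by individual elements of $S'$, and then to propagate this local information around the coset $x+G$ by walking along generators. The earlier lemmas are not needed; everything comes directly from the definition of respecting $\mathcal C$.

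First I would establish the single-generator case, which is the heart of the matter. Fix $s \in S'$ and an arbitrary vertex $v$. The coset $v + \langle s \rangle$ is exactly the vertex set of the cycle generated by $s$ through $v$, and this cycle's edge set is one part of $\mathcal C$. Since $\alpha$ respects $\mathcal C$, those edges map to the edges of another part of $\mathcal C$, namely a cycle generated by some $s^* \in S$; hence $\alpha(v + \langle s \rangle)$ is a coset of $\langle s^* \rangle$. Because $\alpha$ is a graph automorphism, it carries this cycle to one of the same length, so $\langle s \rangle$ and $\langle s^* \rangle$ have the same order. Here the circulant hypothesis enters decisively: $\Z_n$ has a unique subgroup of each order, so $\langle s^* \rangle = \langle s \rangle$. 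As $\alpha(v)$ lies in the image coset, I conclude
\[
\alpha\bigl(v + \langle s \rangle\bigr) = \alpha(v) + \langle s \rangle \quad\text{for every } s \in S' \text{ and every vertex } v;
\]
call this relation $(*)$. The argument works uniformly for graphs and digraphs, since in either case a part of $\mathcal C$ has vertex set equal to a coset of the generating element's cyclic subgroup.

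Next I would bootstrap from $(*)$ to all of $G$ using connectivity within the coset. Because $G = \langle S' \rangle$, every element of $x+G$ is reachable from $x$ by a walk $x = v_0, v_1, \ldots, v_\ell$ in which each step satisfies $v_{t+1} = v_t \pm s$ for some $s \in S'$. Each such step keeps us in the same coset of $\langle s \rangle$, so $(*)$ gives $\alpha(v_{t+1}) \in \alpha(v_t) + \langle s \rangle \subseteq \alpha(v_t) + G$; inducting along the walk yields $\alpha(v_t) \in \alpha(x) + G$ for all $t$, whence $\alpha(x+G) \subseteq \alpha(x) + G$. Finally, since $\alpha$ is a bijection and $|x+G| = |G| = |\alpha(x)+G|$, the containment of equal finite sets forces $\alpha(x+G) = \alpha(x) + G$, a coset of $G$.

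I expect the main obstacle to be pinning down $(*)$ cleanly, specifically the step asserting that the image cycle is generated by an element of the same order and therefore by a generator of the \emph{very same} subgroup $\langle s \rangle$. This is exactly the point that fails for general groups and relies on $\Z_n$ being cyclic. The remaining propagation and counting steps are routine once $(*)$ is established.
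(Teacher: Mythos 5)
Your proof is correct and follows essentially the same route as the paper: both hinge on the observation that a part of $\mathcal C$ through a vertex is a coset of $\langle s \rangle$, that $\alpha$ sends it to a cycle of the same length, and that the uniqueness of subgroups of each order in $\Z_n$ forces the image to be a coset of the \emph{same} subgroup $\langle s \rangle$, after which one combines this over all $s \in S'$. Your write-up merely makes explicit the propagation step (walking along generators plus the cardinality count) that the paper compresses into ``this is true for every $s \in S'$, so $\alpha(x+G)=y+G$.''
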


\begin{proof}
For any $s \in S$ we have $x$ and $x+s$ are together in a cycle $C$ of length $|s|$. Since $\alpha$ respects $\mathcal C$, $\alpha(C)$ is also a cycle of length $|s|$. Since $\Z_n$ has a unique subgroup of order $|s|$, $\alpha(C)$ must be a coset of this subgroup.  Suppose $\alpha(x)=y$, then $\alpha(C)=y+\langle s\rangle$, and this is true for every $s \in S'$, so $\alpha(x+G)=y+G$.
\end{proof}

We can use Lemma~\ref{j} to assume that many of the vertices of $\Gamma$ are fixed by $\alpha$, specifically vertices of the form $as$ where $a \in \mathbb Z$ and $s \in S$.  In our next lemma, we show that if some vertices are known to be fixed by a graph automorphism that respects $\mathcal C$, this will force other vertices to be fixed also. This lemma is technical, but is the very core of the proof of our main theorem.

\begin{lem}\label{4-cycles}
Let $G=\langle S' \rangle$ for some $S' \subseteq S$. Let $|G|=n'$, let $s \in S$ with $|s|=r$, and let $d=\gcd(n',r)$. Suppose that $\alpha \in\Aut(\Gamma)$ fixes every vertex of some set $T$, where $G \subseteq T \subseteq G'=\langle G, s\rangle$, and $T$ is a union of cosets of $\langle n/d \rangle$. If $x, x+s', x+s \in T$ with $s' \in S'$, then $\alpha$ fixes $x+s+s'$.
\end{lem}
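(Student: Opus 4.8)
The plan is to locate $\alpha(x+s+s')$ by intersecting the constraints coming from the two ``sides'' of the natural $4$-cycle $x,\ x+s',\ x+s+s',\ x+s$, whose edges alternate between type $s'$ and type $s$. Three of its four vertices are fixed, and the goal is to force the fourth, $x+s+s'$, to be fixed as well. Note that $x+s+s'$ is the $s$-neighbour of $x+s'$ and simultaneously the $s'$-neighbour of $x+s$, so its image is pinned by where $\alpha$ can place it relative to the two fixed vertices $x+s'$ and $x+s$.

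First I would fix the coset containing the image. Applying Lemma~\ref{cosets} with the cyclic subgroup $\langle s\rangle$ and using $\alpha(x+s')=x+s'$ shows that $\alpha$ carries the $s$-cycle $C=(x+s')+\langle s\rangle$ to a coset of $\langle s\rangle$ through $x+s'$, which must be $C$ itself; hence $\alpha(x+s+s')\in C$. Symmetrically, since $\alpha$ fixes $x+s$, it maps the $s'$-cycle $D=(x+s)+\langle s'\rangle$ to itself, so $\alpha(x+s+s')\in D$. Since $C$ and $D$ share the point $x+s+s'$, the intersection is the coset $(x+s+s')+(\langle s\rangle\cap\langle s'\rangle)$, and in the cyclic group $\Z_n$ we have $\langle s\rangle\cap\langle s'\rangle\le\langle s\rangle\cap G=\langle n/d\rangle$ (as $\langle s'\rangle\le G$ and the intersection of the subgroups of orders $r$ and $n'$ is the subgroup of order $d$). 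This already confines $\alpha(x+s+s')$ to a single coset of $\langle n/d\rangle$.

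To sharpen this, I would examine how $\alpha$ acts along $C$. Because $x+s'\in T$, $T$ is a union of cosets of $\langle n/d\rangle$, and $\langle n/d\rangle\le\langle s\rangle$, the entire coset $(x+s')+\langle n/d\rangle$ lies in $C\cap T$ and is fixed pointwise; these are $d$ evenly spaced fixed points of the $r$-cycle $C$. Since $\alpha$ respects $\mathcal C$, it sends $C$ to the cycle on the same vertex set generated by some generator $s_j=cs$ of $\langle s\rangle$ with $\gcd(c,r)=1$, so writing the vertices of $C$ as $x+s'+ks$ the restriction has the form $x+s'+ks\mapsto x+s'+\varepsilon c\,ks$ with $\varepsilon=\pm1$. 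Fixing the $d$ spaced points forces $\varepsilon c\equiv 1\pmod d$, so from $C$ alone $\alpha(x+s+s')=x+s'+\varepsilon c\,s$ with $\varepsilon c\equiv 1\pmod d$; the identical analysis on $D$ yields a matching description from the other side.

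The hard part will be to collapse these residual possibilities to the single value $x+s+s'$, and here the two sides of the $4$-cycle must be played off against each other. Equating the two descriptions gives $(\varepsilon c-1)s=(\varepsilon'c'-1)s'\in\langle s\rangle\cap\langle s'\rangle\le\langle n/d\rangle$, which I would combine with the coset restriction from the first step and with the adjacency requirement from both cycles: a spurious image such as $x+s'-s$ is $s$-adjacent to $x+s'$ but fails to be $s'$-adjacent to $x+s$ unless $2s=2s'$ or $2s=0$. The constraints from $C$ and $D$ together should therefore force $\varepsilon c\,s=s$, i.e. $\alpha(x+s+s')=x+s+s'$. I expect the genuinely delicate part to be precisely the degenerate configurations in which the neighbour sets on the two sides collide—$2s=0$, $2s'=0$, or $2s=2s'$ (equivalently very small $d$)—which will have to be settled directly from the explicit structure of $T$ as a union of cosets of $\langle n/d\rangle$ rather than from the generic count.
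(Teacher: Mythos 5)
Your opening moves are sound and in fact track the paper's own proof: applying Lemma~\ref{cosets} through the fixed vertices $x+s'$ and $x+s$ pins $\alpha(x+s+s')$ into the coset $(x+s+s')+\langle n/d\rangle$, and your analysis along $C$, using the $d$ fixed points $(x+s')+\langle n/d\rangle\subseteq T\cap C$, correctly yields $\alpha(x+s'+ks)=x+s'+\varepsilon c k s$ with $\varepsilon c\equiv 1\pmod d$. One inaccuracy en route: the analysis on $D$ is \emph{not} ``identical.'' The inclusion $\langle n/d\rangle\le\langle s\rangle$ used for $C$ holds because $d\mid r$, but $d$ need not divide $|s'|$ (take $n=n'=12$, $s=2$, $s'=4$: then $d=6$ while $|s'|=3$), so the fixed points you can guarantee on $D$ are spaced by $\gcd(|s'|,d)$, not $d$. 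The paper handles this side asymmetrically: since $s'\in G$ and $|G|=n'$, one has $(n'/d)s'\in\langle n/d\rangle$, so the single vertex $x+s+(n'/d)s'$ lies in $T$ and is fixed, and that is what produces the $D$-side constraint.

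The genuine gap is your final step, which you state only as an expectation (``should therefore force''). The mechanism you propose --- adjacency collisions and the degenerate cases $2s=0$, $2s'=0$, $2s=2s'$ --- is not what closes the proof, and no case analysis of that local kind will: the residual ambiguity is arithmetic, not combinatorial. Concretely, write $\alpha(x+s+s')=x+s+s'+z(n/d)$ with $0\le z<d$. Your $C$-constraint is equivalent to saying the error $z(n/d)$ has order dividing $r/d$ (equivalently $d\mid z(r/d)$), and the correct $D$-constraint says its order divides $n'/d$ (equivalently $d\mid z(n'/d)$). Neither constraint alone, even combined with the coset restriction, forces $z=0$: each only bounds the order of the error by $\gcd(d,r/d)$, resp.\ $\gcd(d,n'/d)$, which can be nontrivial. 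The missing idea --- the core of the paper's proof and the only place the hypothesis $d=\gcd(n',r)$ is used --- is to play the two divisibilities against each other: since $d=\gcd(n',r)$ we have $\gcd(r/d,\,n'/d)=1$, so an error whose order divides both $r/d$ and $n'/d$ must be $0$ (the paper phrases this as a contradiction: if $z\neq 0$, pick a prime power $p^a\mid d$ with $p^a\nmid z$; then $d\mid z(r/d)$ and $d\mid z(n'/d)$ force $p\mid r/d$ and $p\mid n'/d$, contradicting coprimality). Without this step your argument shows only that $\alpha(x+s+s')-(x+s+s')$ lies in a possibly nontrivial subgroup, not that it vanishes, so the lemma does not follow from what you have written.
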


\begin{proof}
By assumption, $\alpha$ fixes $x$, $x+s'$, and $x+s$. In $G'$, every coset of $G$ contains at least one vertex of $\langle s\rangle$. Since this vertex is fixed by $\alpha$, by Lemma~\ref{cosets} we have that every coset of $G$ in $G'$ is fixed setwise by $\alpha$.  Similarly, every coset of $\langle s\rangle$ in $G'$ contains at least one vertex of $G$, and hence is fixed setwise by $\alpha$.  Hence every intersection of a coset of $G$ with a coset of $\langle s \rangle$ is fixed (setwise) by $\alpha$; that is, every coset of $\langle n/d \rangle$ in $G'$ is fixed setwise by $\alpha$. If $d=1$ then these cosets are all singletons, one of which is $x+s+s'$, and we are done. We therefore assume $d>1$.

Since the coset of $\langle n/d \rangle$ that contains $x+s+s'$ is fixed setwise by $\alpha$, we must have $\alpha(x+s+s')=x+s+s'+z(n/d)$ for some $z<d$. If $z=0$ then we are done, so we suppose $0 <z<d$.  

Choose $p$ prime and $a \in \Z$ such that $p^a \mid d$ but $p^a \nmid z$; such a $p$ and $a$ exist because $0<z<d$.

Since $\alpha$ respects $\mathcal C$, fixes $x+s'$, and takes $x+s'+s$ to $x+s'+s+z(n/d)$, we must have $\alpha(x+s'+bs)=x+s'+b(s+z(n/d))$ for any integer $b$. In particular, when $b=r/d$, we get $\alpha(x+s'+r/d(s))=x+s'+r/d(s+z(n/d))$. Now, since $|s|=r$ in $\Z_n$, we must have $s=\ell(n/r)$ for some $\ell$ coprime to $n$. Thus, $(r/d)s=(r/d)\ell(n/r)=\ell(n/d)$. Since $x+s' \in T$ and $T$ is a union of cosets of $\langle n/d\rangle$, this shows that $x+s'+r/d(s) \in T$, so by assumption $\alpha$ fixes $x+s'+r/d(s)$. Hence $(r/d)z(n/d)\equiv 0 \pmod{n}$, so we must have $d \mid z(r/d)$.  In particular, $p^a$ divides $z(r/d)$, and since $p^a \nmid z$, this means $p \mid r/d$.

Similarly, since $\alpha$ respects $\mathcal C$, fixes $x+s$, and takes $x+s+s'$ to $x+s+s'+z(n/d)$, we must have $\alpha(x+s+bs')=x+s+b(s'+z(n/d))$ for any integer $b$. In particular, when $b=n'/d$, we get $\alpha(x+s+n'/d(s'))=x+s+n'/d(x'+z(n/d))$. Since $|G|=n'$ is cyclic and $s' \in G$, we have $s'=k(n/n')$ for some $k$. Thus, $(n'/d)s'=(n'/d)k(n/n')=k(n/d)$. Since $x+s \in T$ and $T$ is a union of cosets of $\langle n/d\rangle$, this shows that $x+s+n'/d(s') \in T$, so by assumption $\alpha$ fixes $x+s+n'/d(s')$. Hence $(n'/d)z(n/d)\equiv 0 \pmod{n}$, so we must have $d \mid z(n'/d)$.  In particular, $p^a$ divides $z(n'/d)$, and since $p^a \nmid z$, this means $p \mid n'/d$.

This contradicts the definition of $d=\gcd(n',r)$, so we must have $z=0$, and hence $\alpha(x+s+s')=x+x+s'$.
\end{proof}

We are now ready to prove our main theorem.  

\begin{thm}\label{main}
Let $\Gamma=\Circ(n;S)$ be a connected circulant graph.
Let $\alpha\in\Aut(\Gamma)$ fix the vertex $0$ and respect the partition $\mathcal C$, so for any $C \in \mathcal C$, $\alpha(C) \in \mathcal C$.  Then $\alpha \in \Aut(\Z_n)$.
\end{thm}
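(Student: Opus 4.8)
The plan is to reduce to the case where $\alpha$ already fixes all the ``axis'' vertices $as$, and then to propagate this fixing throughout $\Gamma$ by repeated application of Lemma~\ref{4-cycles}. First I would invoke Lemma~\ref{j} to obtain $\beta \in \Aut(\Z_n;S)$ such that $\gamma:=\beta\alpha$ fixes every vertex $as$ with $a \in \Z$ and $s \in S$. Since $\beta$ is multiplication by a unit preserving $S$ setwise, it sends the cycle generated by $s$ to the cycle generated by $\beta(s) \in S$, so $\beta$ respects $\mathcal C$; hence so does $\gamma$, and $\gamma$ fixes $0$. Because $\Aut(\Z_n)$ is a group containing $\beta$, it suffices to prove $\gamma=\mathrm{id}$, for then $\alpha=\beta^{-1}\in\Aut(\Z_n)$. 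So from here I work with $\gamma$ and aim to show it fixes every vertex.

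For the outer argument I would use the chain $S_1 \le S_2 \le \cdots \le S_c$ and prove by induction on $k$ that $\gamma$ fixes $S_k$ pointwise; connectedness gives $S_c=\Z_n$, which finishes the proof. The base case $k=1$ is immediate, since $S_1=\langle s_1\rangle$ consists of multiples of $s_1$, all fixed. For the inductive step I set $G=S_{k-1}$, $S'=\{s_1,\ldots,s_{k-1}\}$, $s=s_k$, so $G'=S_k=\langle G,s\rangle$, and put $n'=|G|$, $r=|s|$, $d=\gcd(n',r)$; note $\langle n/d\rangle=G\cap\langle s\rangle$, so both $G$ and $\langle s\rangle$ are unions of cosets of $\langle n/d\rangle$, as required by Lemma~\ref{4-cycles}.

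The heart is this inductive step, which I would carry out in the quotient $\bar{G'}=G'/\langle n/d\rangle$. This is the internal direct product of $\bar G=G/\langle n/d\rangle$ and $\overline{\langle s\rangle}$, i.e.\ a grid whose cells are the cosets of $\langle n/d\rangle$ in $G'$. By the inductive hypothesis $G$ is fixed pointwise (the ``bottom row'', of $\overline{\langle s\rangle}$-coordinate $0$), and since the multiples of $s$ are fixed, $\langle s\rangle$ is fixed pointwise (the ``left column'', of $\bar G$-coordinate $0$). I then let $T$ be the union of all cosets of $\langle n/d\rangle$ inside $G'$ that $\gamma$ fixes pointwise, and grow $T$ one cell at a time. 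If the three cells at grid positions $(\bar g,\bar b)$, $(\bar g+\bar{s'},\bar b)$, $(\bar g,\bar b+1)$ all lie in $T$ for some $s'\in S'$, I claim the fourth cell, at $(\bar g+\bar{s'},\bar b+1)$, is fixed pointwise: for each $w$ in that coset, set $x=w-s-s'$, so that $x$, $x+s'$, $x+s$ occupy exactly the three known cells and hence lie in $T$, whence Lemma~\ref{4-cycles} gives $\gamma(w)=\gamma(x+s+s')=x+s+s'=w$. Since $\bar G=\langle\bar{s'}:s'\in S'\rangle$, this L-shaped closure, started from the fixed bottom row and left column, fills the whole grid, giving $T=G'=S_k$.

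The step I expect to be the main obstacle --- beyond Lemma~\ref{4-cycles} itself, which does the genuine work --- is the bookkeeping that upgrades the lemma's single-vertex conclusion to the fixing of an \emph{entire} coset: the device of letting $w$ range over the target coset and solving $x=w-s-s'$ is what makes the grid induction run, and at each stage one must verify that the enlarged $T$ is still a union of $\langle n/d\rangle$-cosets with $G\subseteq T\subseteq G'$, so that the hypotheses of Lemma~\ref{4-cycles} persist throughout the closure process.
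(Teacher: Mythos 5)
Your proposal is correct and follows essentially the same route as the paper: reduce via Lemma~\ref{j} to an automorphism fixing all vertices $as$ (checking, as you do, that it still respects $\mathcal C$), then run an outer induction on the chain $S_1 \le S_2 \le \cdots \le S_c$ with an inner closure argument that propagates pointwise fixing across cosets of $\langle n/d \rangle$ via Lemma~\ref{4-cycles}. Your grid picture and the device of letting $w$ range over the target coset with $x=w-s-s'$ are just a cleaner repackaging of the paper's sets $T_m$, which it likewise proves are unions of $\langle n/d\rangle$-cosets before applying the lemma vertex by vertex.
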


\begin{proof}
By Lemma~\ref{j}, replacing $\alpha$ by $\beta\alpha$ if necessary, we may assume that $\alpha$ fixes $as$ for every $a \in \Z$ and every $s\in S$. We will show that $\alpha$ in fact fixes every vertex of $\Gamma$, so $\alpha=1 \in \Aut(\Z_n)$.

We will proceed with a nested induction argument in order to prove that every vertex of $\Gamma$ is fixed by $\alpha$.  In the outer induction we will prove that for each $i$, every vertex of $S_i$ is fixed by $\alpha$. For our base case, we know that every vertex of $S_1=\langle s_1\rangle$ is fixed by $\alpha$, as every vertex of $\langle s \rangle$ is fixed by $\alpha$ for every $s \in S$.  Inductively, assume that every vertex of $S_k$ is fixed by $\alpha$. We will deduce that every vertex of $S_{k+1}$ is fixed by $\alpha$.

Define $T_0=S_k \cup \langle s_{k+1}\rangle$, and for $m \ge 1$, $$T_m=T_{m-1}\cup \{s \in S_{k+1}: s-s_{k+1} \in T_{m-1}\text{ and }s-s_y \in T_{m-1}\text{ for some }1\le y \le k\}.$$  It is not hard to see that every element of $S_{k+1}$ will be in $T_m$ for some $m$.  Our inner inductive argument will be to show that for each $i$, every vertex in $T_i$ is fixed. Clearly, since $S_k$ is fixed pointwise by $\alpha$ by our outer inductive hypothesis, and every vertex of $\langle s_{k+1}\rangle$ is fixed by $\alpha$, every vertex of $T_0$ is also fixed by $\alpha$. This is the base case for our inner induction.

Notice that $T_0$ is a union of cosets of $\langle n/d \rangle$.  We claim that every $T_m$ is a union of cosets of $\langle n/d\rangle$.  We prove this by yet another inductive argument, before we begin our proof that every vertex of $T_m$ is fixed by $\alpha$, as it will be required in that proof.  Suppose that $x' \in T_m$.  If $x' \in T_{m-1}$ then by our inductive hypothesis, the coset of $\langle n/d \rangle$ that contains $x'$ is in $T_{m-1}$.  If $x' \not\in T_{m-1}$ then $x'-s_{k+1} \in T_{m-1}$ and there is some $1 \le y \le k$ such that $x'-s_y \in T_{m-1}$.  But since $T_{m-1}$ is a union of cosets of $\langle n/d\rangle$, this means that $x'-s_{k+1}+\langle n/d\rangle \subseteq T_{m-1}$ and $x'-s_y+\langle n/d\rangle \subseteq T_{m-1}$, so clearly $x'+\langle n/d \rangle \subseteq T_m$, as desired.

Now we proceed with our main inner inductive argument, to show that $\alpha$ fixes every point of $S_{k+1}$.
Suppose that every vertex in $T_m$ is fixed by $\alpha$.  Let $x'$ be an arbitrary vertex of $T_{m+1}$.  If $x'\in T_m$ then $\alpha$ fixes $x'$ by hypothesis and we are done.  So by the definition of $T_{m+1}$, we have $x'-s_y \in T_m$ for some $1\le y \le k$, and inductively either $x'-s_y-s_{k+1} \in T_{m_1}$ for some $m_1 \le m-1$, or $x'-s_y \in T_0$.  If $x'-s_y \in \langle s_{k+1}\rangle$ then $x'-s_y-s_{k+1}\in T_0 \subset T_m$, while if $x'-s_y \in S_k$ then $x' \in S_k$ is fixed by $\alpha$ and we are done.  So we may assume that $x=x'-s_y-s_{k+1} \in T_m$, as well as $x+s_{k+1}=x'-s_y \in T_m$ and $x+s_y=x'-s_{k+1} \in T_m$. 

We appeal to Lemma~\ref{4-cycles}, with $G=S_k$, $s=s_{k+1}$, and $T=T_m$. Since all of the conditions of the lemma are satisfied, we conclude that $\alpha$ fixes $x+s_y+s_{k+1}=x'$. Thus every vertex of $T_{m+1}$ is fixed by $\alpha$. This completes the inner induction, allowing us to conclude that every vertex of $S_{k+1}$ is fixed by $\alpha$, which completes the outer induction and the proof.
\end{proof}

\section{Acknowledgements}

I am very much indebted to Toma\v{z} Pisanski for suggesting this question. 

\thebibliography{10}
\bibitem{CPZ}M. Conder, T. Pisanski, and A. \v{Z}itnik, {\it GI}-graphs and their groups, \textit{Journal of Algebraic Combinatorics}, to appear.  

\bibitem{Godsil}C. Godsil, On the full automorphism group of a graph, \textit{Combinatorica} \textbf{1} (1981), 243--256.

\end{document}